\documentclass{article}
\usepackage{makeidx}
\makeindex
\usepackage{amsmath}
\usepackage{amssymb}
\usepackage{amsthm}
\usepackage{fullpage}


\begin{document}
\title{Reaction-diffusion on a time-dependent interval: refining the notion of `critical length'}
\date{}
\author{Jane Allwright}
\maketitle

\newtheorem{theorem}{Theorem}[section]
\newtheorem{lemma}{Lemma}[section]
\newtheorem{proposition}{Proposition}[section]
\newtheorem{corollary}{Corollary}[section]
\theoremstyle{definition}
\newtheorem{remark}{Remark}[section]
\newtheorem{example}{Example}[section]
\newtheorem{definition}{Definition}[section]

{\bf Abstract}
A reaction-diffusion equation is studied in a time-dependent interval whose length varies with time. The reaction term is either linear or of KPP type.
On a fixed interval, it is well-known that if the length is less than a certain critical value then the solution tends to zero.
When the domain length may vary with time, we prove conditions
under which the solution does and does not converge to zero in long time. We show that, even with the length always strictly less than the `critical length', either outcome may occur. Examples are given.
The proof is based on upper and lower estimates for the solution, which are derived in this paper for a general time-dependent interval.

\section{Introduction}
We consider the reaction-diffusion problem:
\begin{equation}\label{eq_psi}
\frac{\partial \psi}{\partial t} = D \frac{\partial^2 \psi}{\partial x^2} +f(\psi)  \qquad \textrm{in } A(t)< x< A(t)+L(t)
\end{equation}
\begin{equation}\label{eq_psi_BC}
\psi(x,t)=0 \qquad \textrm \qquad\textrm{at } x=A(t) \textrm{ and } x=A(t)+L(t)
\end{equation}
where $\psi \geq 0$, $D>0$, $L(t)>0$, and the reaction term $f$ is assumed to be either linear:
\begin{equation}
f(\psi)=f'(0)\psi, \qquad f'(0)>0,
\end{equation}
or a Lipschitz continuous function, differentiable at $0$, and satisfying the following conditions:
\begin{equation}\label{eq_f}
f(0)=f(1)=0, \qquad f'(0)>0,\qquad\frac{f(k)}{k} \textrm{ non-increasing on } k>0.
\end{equation}
Note that these assumptions on $f$ imply that $f(k)\leq f'(0)k$ and so the solution to the linear problem is a supersolution to the nonlinear equation.
This type of nonlinearity is often referred to as being of the KPP type (named after the initials of the authors of \cite{KPP}).

Both the start of the interval, $A(t)$, and the length of the interval, $L(t)$, are prescribed functions of time, and are assumed to be twice continuously differentiable, and $L(t)>0$ for all $t\geq 0$.

It is straightforward to show that, for a fixed domain $0<x<L$, the solution to the linear problem tends exponentially to either zero or infinity, depending on whether $f'(0)-\frac{D\pi^2}{L^2} <0$ or $>0$ respectively. This yields the `critical length'
\begin{equation}
L_{crit}=\pi\sqrt{\frac{D}{f'(0)}}.
\end{equation}
Note that if $f'(0)-\frac{D\pi^2}{L^2}=0$ (i.e. $L=L_{crit}$) then the solution converges to a multiple of the principal eigenfunction, $\sin\left(\frac{\pi x}{L}\right)$. (Namely, it converges to the first term in the Fourier sine series for the initial conditions.)
The purpose of the present paper is to present some refinements of the notion of the `critical length' to cases when the domain length is no longer a constant but is able to vary with time. We first treat the linear growth term, and then also extend the results to the nonlinear KPP term. We find that certain conditions on $L(t)$ guarantee that the solution $\psi$ converges to zero, while other conditions guarantee that $\psi(x,t)\geq B\sin\left(\frac{\pi x}{L(t)}\right)$ for some $B>0$. Even if $L(t)$ is strictly less than $L_{crit}$ for all $t$, then either outcome may occur. In particular, if $L(t)$ tends to $L_{crit}$ from below as some inverse power of $t$, then the outcome depends on this power.

The above reaction-diffusion equation has applications to population dynamics, and the domain with moving boundaries is relevant for modelling a habitat whose size and location may change over time. This could be due to factors such as flooding, loss of snow cover, habitat destruction, forest fire, or the extension of range boundaries when previously unsuitable regions become suitable. With respect to this application, the results of the current paper signify that even if the habitat is maintained below the `critical' size, population extinction is not guaranteed --- the solution may have a non-trivial lower bound. This is not so on a fixed domain, but is a possibility here because of the time-dependence of the domain  boundaries. Our work is also relevant for any other application that can be modelled by a diffusion equation within a spatial domain whose boundary moves under some external influence.

We emphasise that this is not the same as a free boundary problem, in which the moving boundary would be determined as part of the solution.
A number of authors, beginning with Du and Lin \cite{DuLin}, have studied a free boundary problem with a KPP nonlinearity. They consider problems of the type
\begin{equation}\label{eq_fb1}
\frac{\partial u}{\partial t} = d \frac{\partial^2 u}{\partial x^2}+ u(a-bu)  \qquad \textrm{in } g(t)< x< h(t)
\end{equation}
\begin{equation}\label{eq_fb2}
u(g(t),t)=u(h(t),t)=0
\end{equation}
\begin{equation}\label{eq_freeboundary}
g'(t)=-\mu\frac{\partial u}{\partial x}(g(t),t) \qquad h'(t)=-\mu\frac{\partial u}{\partial x}(h(t),t) 
\end{equation}
where $d$, $a$, $b$, and $\mu$ are given positive constants. In their model equation $g(t)$ and $h(t)$ are determined as part of the solution, via equation \eqref{eq_freeboundary}, and the domain length $h(t)-g(t)$ is monotonically increasing in time. See \cite{DuLin}, \cite{DuLin2}, and \cite{BunDuKra}, where Du, Lin, Bunting and Krakowski prove a spreading/vanishing dichotomy: either $g(t)\rightarrow -\infty$, $h(t)\rightarrow +\infty$ and $u$ spreads at an asymptotically constant speed in both directions, or else $g(t)\rightarrow g_{\infty}$, $h(t)\rightarrow h_{\infty}$ with $h_{\infty}-g_{\infty} \leq \pi\sqrt{d/a}$ and there is `vanishing', i.e. $u\rightarrow 0$. 
In the vanishing case, we note that the domain is always enclosed by a stationary interval of the critical length, $\pi\sqrt{d/a}$. 
In \cite{DuLou}, Du and Lou extend the spreading/vanishing results to other monostable reaction terms $f(u)$, showing that the results hold quite generally and are not specific to the special case $f(u)=u(a-bu)$.
(In particular, and in contrast to section \ref{section_nonlinear} of our work, \cite{DuLou} does not need to deal separately with the cases where $f$ is or is not linear on some neighbourhood $[0,k_0)$ of $0$.)

It is important to be aware that the `vanishing' behaviour proven in the free boundary papers \cite{DuLin}, \cite{DuLou} is a consequence of the Stefan boundary conditions \eqref{eq_freeboundary} in their model, and not simply a consequence of the domain length being less than the critical length. In particular, these boundary conditions imply that \emph{if} $h'(t)$ and $g'(t)$ tend to zero then so does the gradient $\frac{\partial u}{\partial x}$ at the boundary, and so the solution cannot be bounded below by any positive multiple of $\sin\left(\frac{\pi x}{h_{\infty}-g_{\infty}}\right)$.
This is in contrast to our current paper (which is not a free boundary problem and does not impose the Stefan conditions \eqref{eq_freeboundary}, but rather prescribes the movement of the domain boundaries). We demonstrate cases in which the domain length satisfies $L(t)<L_{crit}$, $L(t)\rightarrow L_{crit}$, $\dot{L}(t)\rightarrow 0$, and the solution \emph{is} bounded below by a positive multiple of $\sin\left(\frac{\pi x}{L(t)}\right)$.

Many subsequent papers on the free boundary problem \eqref{eq_fb1}, \eqref{eq_fb2}, \eqref{eq_freeboundary} have investigated the case of spreading in great detail, but much less attention has been given to the case of vanishing. Indeed, to our knowledge, aside from \cite{DuLin}, \cite{DuLou} who considered the nonlinear free boundary problem and proved vanishing, there have been no other publications looking at a reaction-diffusion equation on a time-dependent domain of less than the critical length.

The recent paper \cite{JA1} by the current author considers equations \eqref{eq_psi}, \eqref{eq_psi_BC} on a time-dependent interval, and presents several exact solutions for the linear case. These involve domains where
$L(t)$ has the form $L(t)=\sqrt{at^2+2bt+L_0^2}$. As such $L(t)$ is either constant in time, or $L(t)\rightarrow 0$ in a finite time, or $L(t)\rightarrow\infty$ as $t\rightarrow\infty$. The other result of \cite{JA1} concerns the long-time behaviour near the boundaries, when the length of the interval tends to infinity with its endpoints moving at $\pm(2\sqrt{Df'(0)}t - \alpha\log(t+1)-\eta(t))$ with $\alpha>0$ and $\eta(t)=O(1)$. The approach in \cite{JA1} is based upon changes of variables (both of the spatial variable $x$ and the independent variable $\psi$) which allows exact solutions, subsolutions and supersolutions to be constructed for the transformed equation. This is the method which will also be employed in the current study, but here we use different upper and lower bounds on the transformed equation to derive a new sub- and supersolution pair.

The main result of this paper is contained in Theorem \ref{Theorem_comparison}, which yields sub- and supersolutions for the linear equation with general $A(t)$ and $L(t)$. The results in Section \ref{section_consequences} are then the applications of Theorem \ref{Theorem_comparison} to specific cases of interest. In particular, Corollary \ref{Corollary_nonconv} gives conditions on $L(t)$ under which the solution does not converge to zero in long time, and Corollary \ref{Corollary_0conv} gives conditions on $L(t)$ for which the solution does converge to zero. Examples of both are given. In Example \ref{example2} we consider $L(t)$ tending towards $L_{crit}$ from below like some inverse power of $t$, and find that the outcome is determined by the power of $t$. Thus we provide both a set of conditions on $L(t)$, and a family of examples, such that $L(t)$ is always strictly less than the `critical length' and yet the lower bound $\psi(x,t)\geq B\sin\left(\frac{\pi x}{L(t)}\right)$ is valid for some constant $B>0$. We finish the paper by considering the nonlinear equation, in Section \ref{section_nonlinear}.

\section{Derivation of sub- and supersolutions}
We consider the linear equation and begin as in \cite{JA1} by transforming
onto a fixed spatial domain and making a change of variables. Let $\psi(x,t)=u(\xi,t)$ where $\xi=\frac{(x-A(t))}{L(t)}L_0$ (for some $L_0>0$) to get
\begin{equation}\label{eq_u}
\frac{\partial u}{\partial t} = D \frac{L_0^2 }{L(t)^2} \frac{\partial^2 u}{\partial \xi^2}+\left(\frac{\dot{A}( t)L_0+\xi\dot{L}(t)}{L(t)}\right)\frac{\partial u}{\partial \xi} +f'(0) u  \qquad \textrm{in } 0< \xi< L_0
\end{equation}
\begin{equation}
u(\xi,t)=0 \qquad \textrm\qquad\textrm{at } \xi=0 \textrm{ and } \xi=L_0.
\end{equation}
Then let
\begin{align}\label{eq_wdef}
w(\xi, t)=& u(\xi, t)\left(\frac{L(t)}{L_0}\right)^{1/2}\exp{\left(-f'(0)t+\int\limits_0^ t\frac{\dot{A}(\zeta)^2 }{4D}d\zeta\right)}\nonumber\\
&\times\exp{\left(\frac{\xi^2 \dot{L}(t)L(t)}{4DL_0^2}+\frac{\xi\dot{A}(t)L(t)}{2DL_0}\right)}
\end{align}
and find that $w$ satisfies
\begin{equation}\label{eq_weq}
\frac{\partial w}{\partial t} = D \frac{L_0^2 }{L(t)^2} \frac{\partial^2 w}{\partial \xi^2}+\left( \frac{\xi^2\ddot{L}(t)L(t)}{4DL_0^2} + \frac{\xi\ddot{A}(t)L(t)}{2DL_0} \right)w  \qquad \textrm{in } 0< \xi< L_0
\end{equation}
\begin{equation}
w(\xi,t)=0 \qquad \textrm\qquad\textrm{at } \xi=0 \textrm{ and } \xi=L_0.
\end{equation}
Now define the non-negative functions $\overline{Q}(t)$ and $\underline{Q}(t)$ as follows:
\begin{equation}
\overline{Q}(t)=\max_{0\leq\eta\leq 1} \left(\frac{\eta^2\ddot{L}(t)L(t)}{2} + \eta \ddot{A}(t)L(t) \right),
\end{equation}
\begin{equation}
\underline{Q}(t)=-\min_{0\leq\eta\leq 1} \left( \frac{\eta^2\ddot{L}(t)L(t)}{2} + \eta \ddot{A}(t)L(t) \right).
\end{equation}

\begin{theorem} \label{Theorem_comparison}
Let $\psi(x,t)$ satisfy the linear problem on $A(t)<x<A(t)+L(t)$, and let $u(\xi,t)$ and $w(\xi,t)$ be as above. Suppose that $b \sin\left(\frac{\pi\xi}{L_0}\right) \leq w(\xi,0) \leq a \sin\left(\frac{\pi\xi}{L_0}\right)$ for some $0<b\leq a$.
Then for all $t\geq 0$ we have
\begin{align}
u(\xi,t) \geq & b \sin\left(\frac{\pi\xi}{L_0}\right)\left(\frac{L_0}{L(t)}\right)^{1/2}\exp{\left(f'(0)t -\int_0^t \frac{\dot{A}(\zeta)^2}{4D}d\zeta \right)}\nonumber\\
&\times\exp{\left(\int_0^t \left(-\frac{D\pi^2}{L(\zeta)^2}-\frac{\underline{Q}(\zeta)}{2D}\right) d\zeta -\frac{\xi^2\dot{L}(t)L(t)}{4DL_0^2} - \frac{\xi\dot{A}(t)L(t)}{2DL_0}\right)}
\end{align}
and
\begin{align}
u(\xi,t)\leq & a \sin\left(\frac{\pi\xi}{L_0}\right)\left(\frac{L_0}{L(t)}\right)^{1/2}\exp{\left(f'(0)t -\int_0^t \frac{\dot{A}(\zeta)^2}{4D}d\zeta \right)}\nonumber\\
&\times\exp{\left(\int_0^t \left(-\frac{D\pi^2}{L(\zeta)^2}+\frac{\overline{Q}(\zeta)}{2D}\right) d\zeta -\frac{\xi^2\dot{L}(t)L(t)}{4DL_0^2} - \frac{\xi\dot{A}(t)L(t)}{2DL_0}\right)}.
\end{align}
\end{theorem}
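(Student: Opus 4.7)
The plan is to work entirely in the $w$-formulation, use the parabolic comparison principle to bound $w$ above and below by explicit separable solutions of the form $g(t)\sin(\pi\xi/L_0)$, and then invert the change of variables \eqref{eq_wdef} to recover the claimed bounds on $u$.

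First I would read off from \eqref{eq_weq} that the zero-order coefficient is exactly
\[
V(\xi,t)\;=\;\frac{1}{2D}\left(\frac{\eta^2 \ddot{L}(t)L(t)}{2}+\eta\,\ddot{A}(t)L(t)\right),\qquad \eta=\xi/L_0\in[0,1],
\]
so by definition $-\underline{Q}(t)/(2D)\le V(\xi,t)\le \overline{Q}(t)/(2D)$. Next I would note that $w\ge 0$: since $\psi\ge 0$ and the exponential and power factors in \eqref{eq_wdef} are positive, $u\ge 0$ and hence $w\ge 0$; moreover $w$ vanishes on $\xi=0,L_0$ and the hypothesis pins down the initial data between $b\sin(\pi\xi/L_0)$ and $a\sin(\pi\xi/L_0)$. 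Because $w\ge 0$, the bounds on $V$ translate into the two one-sided parabolic inequalities
\[
\partial_t w \;\le\; D\frac{L_0^2}{L(t)^2}\partial_\xi^2 w + \frac{\overline{Q}(t)}{2D}\,w,\qquad
\partial_t w \;\ge\; D\frac{L_0^2}{L(t)^2}\partial_\xi^2 w - \frac{\underline{Q}(t)}{2D}\,w.
\]

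The next step is to construct matching sub- and supersolutions. Trying $W_\pm(\xi,t)=g_\pm(t)\sin(\pi\xi/L_0)$ turns each inequality into an ODE for $g_\pm$, because $\partial_\xi^2 W_\pm=-(\pi^2/L_0^2)W_\pm$. Choosing equality and solving gives
\[
g_+(t)=a\exp\!\left(\int_0^t\!\Bigl(-\frac{D\pi^2}{L(\zeta)^2}+\frac{\overline{Q}(\zeta)}{2D}\Bigr)d\zeta\right),\quad
g_-(t)=b\exp\!\left(\int_0^t\!\Bigl(-\frac{D\pi^2}{L(\zeta)^2}-\frac{\underline{Q}(\zeta)}{2D}\Bigr)d\zeta\right).
\]
By construction $W_+$ is a supersolution and $W_-$ a subsolution of the $w$-equation, both with Dirichlet boundary values $0$ and with $W_-(\cdot,0)\le w(\cdot,0)\le W_+(\cdot,0)$. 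Applying the parabolic comparison principle to the linear operator $\partial_t-D(L_0^2/L^2)\partial_\xi^2-V$ on $(0,L_0)\times(0,T]$ then yields $W_-(\xi,t)\le w(\xi,t)\le W_+(\xi,t)$ for all $t\ge 0$.

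Finally, I would solve \eqref{eq_wdef} for $u$ in terms of $w$,
\[
u(\xi,t)=w(\xi,t)\Bigl(\frac{L_0}{L(t)}\Bigr)^{1/2}\exp\!\left(f'(0)t-\int_0^t\frac{\dot{A}(\zeta)^2}{4D}d\zeta\right)\exp\!\left(-\frac{\xi^2\dot{L}(t)L(t)}{4DL_0^2}-\frac{\xi\dot{A}(t)L(t)}{2DL_0}\right),
\]
and substitute the two bounds on $w$. Because every multiplicative factor is strictly positive, the inequalities survive intact and produce exactly the displayed expressions for the upper and lower bounds on $u$. The only genuinely delicate point is justifying the comparison principle with a possibly sign-changing zeroth-order coefficient $V$; this is standard once one isolates the bound $V\le\overline{Q}/(2D)$ (resp. $V\ge-\underline{Q}/(2D)$) and exploits $w\ge 0$, so I expect no real obstacle beyond being careful with signs in that step.
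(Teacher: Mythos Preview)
Your proposal is correct and follows essentially the same route as the paper: bound the potential term in the $w$-equation by $\overline{Q}/(2D)$ and $-\underline{Q}/(2D)$, exploit $w\ge 0$ to obtain the two one-sided parabolic inequalities, compare with the separable functions $g_\pm(t)\sin(\pi\xi/L_0)$, and then invert \eqref{eq_wdef}. The paper's version is simply terser, so your write-up merely fills in details (positivity of $w$, the explicit ODE for $g_\pm$, and the use of the comparison principle) that the paper leaves implicit.
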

\begin{proof}
By the definitions of $\overline{Q}(t)$ and $\underline{Q}(t)$, and the equation \eqref{eq_weq} satisfied by $w(\xi,t)\geq0$, we have
\begin{equation}
-\frac{\underline{Q}(t)}{2D}w(\xi,t)\leq \frac{\partial w}{\partial t}-D\frac{L_0^2}{L(t)^2}\frac{\partial^2 w}{\partial \xi^2}  \leq \frac{\overline{Q}(t)}{2D}w(\xi,t)
\end{equation}
uniformly in $0\leq\xi\leq L_0$.
This then leads to the sub- and supersolutions
\begin{equation}
b \sin\left(\frac{\pi\xi}{L_0}\right)\exp{\left(\int_0^t \left(-\frac{D\pi^2}{L(\zeta)^2}-\frac{\underline{Q}(\zeta)}{2D}\right)d\zeta\right)}\leq w(\xi,t),
\end{equation}
\begin{equation}
w(\xi,t)\leq a \sin\left(\frac{\pi\xi}{L_0}\right)\exp{\left(\int_0^t \left(-\frac{D\pi^2}{L(\zeta)^2}+\frac{\overline{Q}(\zeta)}{2D}\right)d\zeta\right)}.
\end{equation}
Changing variables back using equation \eqref{eq_wdef} gives the claimed bounds for $u(\xi,t)$.
\end{proof}

\section{Non-convergence to zero and convergence to zero}\label{section_consequences}
In this section we continue to consider the linear reaction term, and we prove conditions on $L(t)$ under which the solution does not, or does, tend to zero as $t\rightarrow\infty$.
For any function $F(t)$, denote its positive and negative parts by $[F(t)]^{+}\geq0$ and $[F(t)]^{-}\geq0$, so that $F(t)\equiv [F(t)]^{+} -[F(t)]^{-}$.

\begin{corollary}\label{Corollary_nonconv} Non-convergence to zero.\\
Let $\psi(x,t)$ satisfy the linear problem on $0<x<L(t)$, and assume
that the following conditions hold:
\begin{equation}\label{eq_cond_1}
L(t) \textrm{ and }\dot{L}(t)L(t) \textrm{ are bounded above},
\end{equation}
\begin{equation}\label{eq_cond_2}
\int_0^t \left(\frac{1}{L(\zeta)^2}- \frac{1}{L_{crit}^2}\right) d\zeta \textrm{ is bounded above},
\end{equation}
\begin{equation}\label{eq_cond_3}
\int_0^{\infty} L(t)[\ddot{L}(t)]^{-} dt <\infty.
\end{equation}
Then for $\psi(x,0)\geq 0$, and not identically zero, $\psi(x,t)$ does not converge to zero as $t\rightarrow \infty$. In particular
\begin{equation}\label{eq_Fourier_coeff}
\liminf_{t\rightarrow\infty}\left(\frac{2}{L(t)}\int_0^{L(t)} \psi(x,t)\sin\left(\frac{\pi x}{L(t)}\right)dx\right) >0.
\end{equation}
\end{corollary}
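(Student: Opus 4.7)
The plan is to apply the lower bound of Theorem \ref{Theorem_comparison} with $A(t)\equiv 0$ and to verify that every $t$-dependent factor on its right-hand side is bounded away from zero uniformly in $\xi\in[0,L_0]$. With $A\equiv 0$ we have $\dot{A}=\ddot{A}=0$ and $\underline{Q}(t)$ reduces to $\tfrac12 L(t)[\ddot{L}(t)]^{-}$. After using $f'(0)=D\pi^2/L_{crit}^2$, the exponent $f'(0)t-\int_0^t D\pi^2/L(\zeta)^2\,d\zeta$ becomes $-D\pi^2\int_0^t\bigl(L(\zeta)^{-2}-L_{crit}^{-2}\bigr)d\zeta$, which is bounded below by condition \eqref{eq_cond_2}. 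Condition \eqref{eq_cond_3} bounds $\int_0^t L(\zeta)[\ddot{L}(\zeta)]^{-}d\zeta$ from above, hence $-\int_0^t \underline{Q}(\zeta)/(2D)\,d\zeta$ from below. The upper bound on $L(t)$ in \eqref{eq_cond_1} gives a uniform positive lower bound on the prefactor $(L_0/L(t))^{1/2}$, and the upper bound on $\dot{L}(t)L(t)$ in \eqref{eq_cond_1}, combined with $0\leq\xi^2\leq L_0^2$, makes $\exp(-\xi^2\dot{L}(t)L(t)/(4DL_0^2))$ uniformly bounded below over $\xi\in[0,L_0]$.

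Multiplying these four uniform lower bounds together, Theorem \ref{Theorem_comparison} will yield $u(\xi,t)\geq C\sin(\pi\xi/L_0)$ for some constant $C>0$ independent of $t$ and $\xi\in[0,L_0]$. To conclude, I would substitute $x=\xi L(t)/L_0$ in the Fourier integral to obtain
\begin{equation*}
\frac{2}{L(t)}\int_0^{L(t)}\psi(x,t)\sin\!\left(\frac{\pi x}{L(t)}\right)dx=\frac{2}{L_0}\int_0^{L_0}u(\xi,t)\sin\!\left(\frac{\pi\xi}{L_0}\right)d\xi\geq C,
\end{equation*}
which is exactly \eqref{eq_Fourier_coeff}.

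The one subtlety I anticipate is verifying the hypothesis of Theorem \ref{Theorem_comparison} that $w(\xi,0)\geq b\sin(\pi\xi/L_0)$ for some $b>0$, since this is not automatic from a general non-negative, non-trivial initial datum $\psi(\cdot,0)$. My plan is to regularize by first evolving a short time: for any $t_0>0$ the strong maximum principle makes $\psi(\cdot,t_0)$ strictly positive in the open interval, and Hopf's lemma forces strictly negative outward normal derivatives at both endpoints, so through the definition \eqref{eq_wdef} (where the multiplicative factor relating $u$ and $w$ is strictly positive and smooth in $\xi$) one obtains $w(\xi,t_0)\geq b\sin(\pi\xi/L_0)$ for some $b>0$. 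Restarting Theorem \ref{Theorem_comparison} from $t_0$ instead of $0$ preserves the relevant bounds, since conditions \eqref{eq_cond_2} and \eqref{eq_cond_3} are stable under a finite time shift. Beyond this regularisation step I expect no obstacle: the three conditions have been chosen precisely to make each of the four factors in the lower estimate of Theorem \ref{Theorem_comparison} bounded below by a positive constant.
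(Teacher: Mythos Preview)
Your proposal is correct and follows essentially the same route as the paper: apply the lower bound of Theorem~\ref{Theorem_comparison} with $A\equiv 0$, identify $\underline{Q}(t)=\tfrac12 L(t)[\ddot{L}(t)]^{-}$, and use conditions \eqref{eq_cond_1}--\eqref{eq_cond_3} to bound each factor below, with the initial-data hypothesis handled by the strong maximum principle and Hopf's lemma after a short time shift. The paper's proof is organised in exactly this way, including the same regularisation step and the same change of variables for the Fourier coefficient.
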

\begin{proof}
We may assume without loss of generality that
\begin{equation}
u(\xi,0)\geq b \sin\left(\frac{\pi\xi}{L_0}\right)\exp{\left( -\frac{\xi^2\dot{L}(0)L(0)}{4DL_0^2} \right)}
\end{equation}
for some $b>0$, since by the strong parabolic maximum principle and Hopf's Lemma  such an inequality will be true at each time $t_0>0$. It follows from Theorem \ref{Theorem_comparison} that
\begin{align}
u(\xi,t)\geq & b \sin\left(\frac{\pi\xi}{L_0}\right)\left(\frac{L_0}{L(t)}\right)^{1/2}\exp{(f'(0)t)}\nonumber\\
&\times\exp{\left(\int_0^t \left(-\frac{D\pi^2}{L(\zeta)^2}-\frac{\underline{Q}(\zeta)}{2D}\right) d\zeta -\frac{\xi^2\dot{L}(t)L(t)}{4DL_0^2} \right)}
\end{align}
for all $t\geq 0$, where $\underline{Q}(t)=\frac{L(t)[\ddot{L}(t)]^{-}}{2}$. Substitute $f'(0)=\frac{D\pi^2}{L_{crit}^2}$, and use the assumptions in equation \eqref{eq_cond_1}, to get that for some $b'>0$
\begin{equation}
u(\xi,t)\geq b' \sin\left(\frac{\pi\xi}{L_0}\right)\exp{\left(\int_0^t \left(\frac{D\pi^2}{L_{crit}^2}-\frac{D\pi^2}{L(\zeta)^2}-\frac{L(\zeta)[\ddot{L}(\zeta)]^{-}}{4D}\right) d\zeta \right)}.
\end{equation}
Now the assumptions in equations \eqref{eq_cond_2} and \eqref{eq_cond_3} give that this exponential factor is bounded below by a positive value: there exists $B>0$ such that 
\begin{equation}
u(\xi,t)\geq B \sin\left(\frac{\pi\xi}{L_0}\right) \qquad \textrm{for all } t\geq 0.
\end{equation}
So the first Fourier sine coefficient is bounded below as
\begin{equation}
\frac{2}{L(t)}\int_0^{L(t)} \psi(x,t)\sin\left(\frac{\pi x}{L(t)}\right)dx = \frac{2}{L_0}\int_0^{L_0} u(\xi,t)\sin\left(\frac{\pi \xi}{L_0}\right)d\xi \geq B
\end{equation}
and thus we reach the conclusion.
\end{proof}

\begin{corollary}\label{Corollary_0conv}  Convergence to zero.\\
Let $\psi(x,t)$ satisfy the linear problem on $0<x<L(t)$, and assume
that the following conditions hold:
\begin{equation}\label{eq_cond_4}
\dot{L}(t)L(t) \textrm{ is bounded below},
\end{equation}
\begin{equation}\label{eq_cond_5}
\int_0^T  \left(\frac{D\pi^2}{L(t)^2}- \frac{D\pi^2}{L_{crit}^2} -\frac{L(t)[\ddot{L}(t)]^{+}}{4D} + \frac{\dot{L}(t)}{2L(t)} \right) dt \rightarrow \infty \qquad \textrm{as }T\rightarrow\infty.
\end{equation}
Then $\psi(x,t)$ converges uniformly to zero as $t\rightarrow \infty$.
\end{corollary}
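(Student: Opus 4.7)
The plan is to mirror the proof of Corollary \ref{Corollary_nonconv}, but apply the supersolution half of Theorem \ref{Theorem_comparison} instead of the subsolution. First, via the parabolic maximum principle and Hopf lemma applied at a small positive time, I would justify (shifting the time origin if necessary) the assumption $w(\xi,0) \le a \sin(\pi\xi/L_0)$ for some $a > 0$, so that the hypothesis of Theorem \ref{Theorem_comparison} is satisfied and the upper bound there applies.

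Next, specialising the supersolution bound to $A(t)\equiv 0$ kills every $\dot A$ and $\ddot A$ term and reduces $\overline{Q}(t)$ to $L(t)[\ddot L(t)]^+/2$ (the maximum is attained at $\eta=1$ when $\ddot L\ge 0$ and at $\eta=0$ otherwise, since $\ddot A\equiv 0$). After substituting $f'(0)=D\pi^2/L_{crit}^2$, the upper bound for $u(\xi,t)$ becomes an exponential whose exponent, once the geometric prefactor is rewritten as $(L_0/L(t))^{1/2}=(L_0/L(0))^{1/2}\exp\!\left(-\int_0^t \tfrac{\dot L(\zeta)}{2L(\zeta)}\,d\zeta\right)$ and absorbed into the integral, collects into exactly
\[
-\int_0^t\!\!\left(\frac{D\pi^2}{L(\zeta)^2}-\frac{D\pi^2}{L_{crit}^2}-\frac{L(\zeta)[\ddot L(\zeta)]^+}{4D}+\frac{\dot L(\zeta)}{2L(\zeta)}\right)d\zeta\;-\;\frac{\xi^2\dot L(t)L(t)}{4DL_0^2}.
\]
This algebraic identification is what forces hypothesis \eqref{eq_cond_5} into the right form, and is the only non-routine manipulation.

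To conclude, hypothesis \eqref{eq_cond_4} gives $\dot L(t)L(t)\ge -M$ for some $M\ge 0$, so the boundary term $-\xi^2\dot L(t)L(t)/(4DL_0^2)$ is bounded above by $M/(4D)$ uniformly in $\xi\in[0,L_0]$ and $t\ge 0$. Hypothesis \eqref{eq_cond_5} then drives the main integral to $+\infty$, making the entire exponential factor tend to $0$ uniformly in $\xi$. Inverting the change of variables through $\psi(x,t)=u(xL_0/L(t),t)$ and noting that $\sin(\pi\xi/L_0)$ is bounded yields uniform convergence of $\psi$ to zero.

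I expect the only real obstacle to be the bookkeeping just described: ensuring that the scaling prefactor $(L_0/L(t))^{1/2}$ is correctly absorbed so that the integrand produced by Theorem \ref{Theorem_comparison} matches the hypothesis \eqref{eq_cond_5} exactly. Everything else is a standard supersolution comparison combined with the uniform control supplied by \eqref{eq_cond_4}.
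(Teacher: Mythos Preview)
Your proposal is correct and follows essentially the same route as the paper's proof: apply the supersolution bound of Theorem~\ref{Theorem_comparison} with $A\equiv 0$, identify $\overline{Q}(t)=\tfrac12 L(t)[\ddot L(t)]^+$, use \eqref{eq_cond_4} to bound the $\xi^2\dot L L$ term uniformly, and use \eqref{eq_cond_5} to send the exponential to zero. Your explicit rewriting of the prefactor $(L_0/L(t))^{1/2}$ as $\exp\bigl(-\int_0^t \dot L/(2L)\,d\zeta\bigr)$ is exactly the bookkeeping the paper performs implicitly when it passes to the bound containing the $-\dot L/(2L)$ term; the paper simply absorbs this and the $\xi^2$ term simultaneously into the constant $a'$ without writing out the intermediate step.
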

\begin{proof}
We may assume without loss of generality that
\begin{equation}
u(\xi,0)\leq a \sin\left(\frac{\pi\xi}{L_0}\right)\exp{\left( -\frac{\xi^2\dot{L}(0)L(0)}{4DL_0^2} \right)}
\end{equation}
for some $a>0$. It follows from Theorem \ref{Theorem_comparison} that
\begin{align}
u(\xi,t)\leq & \ a \sin\left(\frac{\pi\xi}{L_0}\right)\left(\frac{L_0}{L(t)}\right)^{1/2}\exp{(f'(0)t)}\nonumber\\
&\times\exp{\left(\int_0^t \left(-\frac{D\pi^2}{L(\zeta)^2}+\frac{\overline{Q}(\zeta)}{2D}\right) d\zeta -\frac{\xi^2\dot{L}(t)L(t)}{4DL_0^2} \right)}
\end{align}
for all $t\geq 0$, where $\overline{Q}(t)=\frac{L(t)[\ddot{L}(t)]^{+}}{2}$. Substitute $f'(0)=\frac{D\pi^2}{L_{crit}^2}$ and use the assumption in equation \eqref{eq_cond_4}, to get that for some $a'>0$
\begin{equation}
u(\xi,t)\leq a' \sin\left(\frac{\pi\xi}{L_0}\right)\exp{\left(\int_0^t \left(\frac{D\pi^2}{L_{crit}^2}-\frac{D\pi^2}{L(\zeta)^2}+\frac{L(\zeta)[\ddot{L}(\zeta)]^{+}}{4D} - \frac{\dot{L}(\zeta)}{2L(\zeta)}\right) d\zeta \right)}.
\end{equation}
Thus, under the assumption in equation \eqref{eq_cond_5}, there is uniform convergence to zero.
\end{proof}

\begin{example}\label{example1}
Consider an interval $(0,L(t))$ where $L(t)<L_{crit}$ tends exponentially towards $L_{crit}$:
\begin{equation}
L(t)=L_{crit}(1-\varepsilon e^{-\alpha t})
\end{equation}
for some $0<\varepsilon<1$ and $\alpha>0$.
The conditions of Corollary \ref{Corollary_nonconv} are satisfied and so we deduce that $\psi(x,t)\geq B\sin\left(\frac{\pi x}{L(t)}\right)$ for some $B>0$.
\end{example}

\begin{example}\label{example2}
Consider the interval $(0,L(t))$ where $L(t)<L_{crit}$ is given by
\begin{equation}
L(t)=L_{crit}(1-\varepsilon (t+1)^{-k})
\end{equation}
for some $0<\varepsilon<1$ and $k>0$.
If $k>1$, then the conditions of Corollary \ref{Corollary_nonconv} are satisfied  and thus in this case $\psi(x,t)\geq B\sin\left(\frac{\pi x}{L(t)}\right)$ for some $B>0$. On the other hand, if $0<k\leq 1$,
the conditions of Corollary \ref{Corollary_0conv} are satisfied and so $\psi(x,t)\rightarrow 0$ uniformly in $x$ as $t\rightarrow\infty$. This example gives an indication of how fast or slowly $L(t)$ may be expected to converge to $L_{crit}$, to give each of the two outcomes.
\end{example}

Analogous results to Corollaries \ref{Corollary_nonconv} and \ref{Corollary_0conv} also hold in the case of an interval $(A_0+ct, A_0+ct+L(t))$ where $c$ is a constant with $\vert c \vert<2\sqrt{Df'(0)}$. In this case we define
\begin{equation}\label{eq_c_Lcrit}
L_{crit}(c)=\pi\sqrt{\frac{D}{f'(0)-\frac{c^2}{4D}}}.
\end{equation}
and, exactly as above but with this new definition of $L_{crit}(c)$, we obtain the following corollaries of Theorem \ref{Theorem_comparison}.
\begin{corollary}
Let $\psi(x,t)$ satisfy the linear problem on $(A_0+ct, A_0+ct+L(t))$, with $\vert c \vert<2\sqrt{Df'(0)}$, and assume that the following conditions hold:
\begin{equation}
L(t), \textrm{ } \dot{L}(t)L(t) \textrm{ and } cL(t) \textrm{ are bounded above},
\end{equation}
\begin{equation}
\int_0^t \left(\frac{1}{L(\zeta)^2}- \frac{1}{L_{crit}(c)^2}\right) d\zeta \textrm{ is bounded above},
\end{equation}
\begin{equation}
\int_0^{\infty} L(t)[\ddot{L}(t)]^{-} dt <\infty.
\end{equation}
Then for $\psi(x,0)\geq 0$, and not identically zero, $\psi(x,t)$ does not converge to zero as $t\rightarrow \infty$.
\end{corollary}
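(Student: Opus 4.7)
The plan is to mirror the proof of Corollary \ref{Corollary_nonconv} almost verbatim, with $A(t)=A_0+ct$ replacing $A(t)\equiv 0$ and with $L_{crit}(c)$ from \eqref{eq_c_Lcrit} playing the role of $L_{crit}$. First I would change variables as in Section 2 using the new $A(t)$, noting that $\dot{A}(t)\equiv c$ and $\ddot{A}(t)\equiv 0$, so that $\underline{Q}(t)=L(t)[\ddot{L}(t)]^-/2$ exactly as before and $\int_0^t \dot{A}(\zeta)^2/(4D)\,d\zeta = c^2 t/(4D)$. Then I would invoke the lower bound of Theorem \ref{Theorem_comparison}. As in Corollary \ref{Corollary_nonconv}, by the strong maximum principle and Hopf's lemma there is no loss in assuming $u(\xi,0)\geq b\sin(\pi\xi/L_0)\exp(-\xi^2\dot{L}(0)L(0)/(4DL_0^2)-\xi c L(0)/(2DL_0))$ for some $b>0$.

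The key algebraic step is to combine the prefactor $\exp(f'(0)t - c^2 t/(4D))$ coming from the $\dot{A}$ and $f'(0)$ terms, and to use the identity $f'(0)-c^2/(4D)=D\pi^2/L_{crit}(c)^2$ to rewrite the exponent in the form
\begin{equation}
\int_0^t\left(\frac{D\pi^2}{L_{crit}(c)^2}-\frac{D\pi^2}{L(\zeta)^2}-\frac{L(\zeta)[\ddot{L}(\zeta)]^-}{4D}\right)d\zeta,
\end{equation}
together with the time-$t$ boundary terms $-\xi^2\dot{L}(t)L(t)/(4DL_0^2)$ and $-\xi c L(t)/(2DL_0)$ and the prefactor $(L_0/L(t))^{1/2}$.

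The three boundedness hypotheses are then used, one per undesirable factor. Boundedness of $L(t)$ above gives a positive lower bound on $(L_0/L(t))^{1/2}$; boundedness of $\dot{L}(t)L(t)$ above makes the term $-\xi^2\dot{L}(t)L(t)/(4DL_0^2)$ bounded below uniformly in $\xi\in[0,L_0]$; and the new condition that $cL(t)$ is bounded above handles the new term $-\xi c L(t)/(2DL_0)$ (this is the only genuinely new ingredient compared to Corollary \ref{Corollary_nonconv}, and it is automatic when $c\leq 0$). The remaining integral in the exponent is bounded below by the two hypotheses on $\int_0^t(L(\zeta)^{-2}-L_{crit}(c)^{-2})d\zeta$ and on $\int_0^\infty L(t)[\ddot{L}(t)]^- dt$, exactly as in the proof of Corollary \ref{Corollary_nonconv}.

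Combining these estimates yields $u(\xi,t)\geq B\sin(\pi\xi/L_0)$ for some $B>0$ and all $t\geq 0$, and translating back to $\psi$ via $\psi(x,t)=u(\xi,t)$ with $\xi=(x-A_0-ct)L_0/L(t)$ shows that the first Fourier sine coefficient of $\psi$ on the instantaneous interval is bounded below by a positive constant, so $\psi$ cannot converge to zero. There is no real obstacle here beyond bookkeeping; the only subtlety is recognising that the drift $c$ produces both the shift $f'(0)\mapsto f'(0)-c^2/(4D)$ in the effective exponential growth rate (which is why $L_{crit}(c)$ appears) and the new linear-in-$\xi$ boundary factor that necessitates the extra hypothesis on $cL(t)$.
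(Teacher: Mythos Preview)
Your proposal is correct and is precisely the argument the paper has in mind: the paper states only that the result follows ``exactly as above but with this new definition of $L_{crit}(c)$,'' and your write-up supplies the details in the intended way, including the identification $f'(0)-c^2/(4D)=D\pi^2/L_{crit}(c)^2$ and the observation that $\ddot A\equiv 0$ forces $\underline{Q}(t)=L(t)[\ddot L(t)]^-/2$. The only addition relative to Corollary~\ref{Corollary_nonconv} is the extra boundary factor $-\xi cL(t)/(2DL_0)$, which you correctly match to the hypothesis that $cL(t)$ is bounded above.
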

\begin{corollary}
Let $\psi(x,t)$ satisfy the linear problem on $(A_0+ct, A_0+ct+L(t))$, with $\vert c \vert<2\sqrt{Df'(0)}$, and assume that the following conditions hold:
\begin{equation}
\dot{L}(t)L(t) \textrm{ and } cL(t) \textrm{ are  bounded below},
\end{equation}
\begin{equation}
\int_0^T  \left(\frac{D\pi^2}{L(t)^2}- \frac{D\pi^2}{L_{crit}(c)^2} -\frac{L(t)[\ddot{L}(t)]^{+}}{4D} + \frac{\dot{L}(t)}{2L(t)} \right) dt \rightarrow \infty \qquad \textrm{as }T\rightarrow\infty.
\end{equation}
Then $\psi(x,t)$ converges uniformly to zero as $t\rightarrow \infty$.
\end{corollary}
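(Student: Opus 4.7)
My plan is to follow the proof of Corollary \ref{Corollary_0conv} line by line, now with $A(t) = A_0 + ct$ substituted throughout, and verify that each step still goes through under the slightly strengthened hypotheses. Since $\dot{A}(t) = c$ and $\ddot{A}(t) = 0$, the definition of $\overline{Q}(t)$ collapses to $\overline{Q}(t) = \tfrac{1}{2}L(t)[\ddot{L}(t)]^{+}$ exactly as in the stationary case. Starting from the upper-bound half of Theorem \ref{Theorem_comparison} (after a short-time application of the maximum principle to reduce to an initial condition of the form $u(\xi,0) \leq a\sin(\pi\xi/L_0)\exp(-\xi^2\dot{L}(0)L(0)/(4DL_0^2))$), the exponent growing linearly in $t$ is now
\begin{equation*}
f'(0)t - \int_0^t \frac{c^2}{4D}\,d\zeta = \left(f'(0) - \frac{c^2}{4D}\right)t = \frac{D\pi^2}{L_{crit}(c)^2}\,t,
\end{equation*}
which is precisely why $L_{crit}(c)$ from \eqref{eq_c_Lcrit} is the correct replacement for $L_{crit}$.

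Next I would absorb all bounded prefactors into a single constant. The factor $\bigl(L_0/L(t)\bigr)^{1/2}$ is rewritten as $\exp\!\left(-\tfrac{1}{2}\int_0^t \dot{L}(\zeta)/L(\zeta)\,d\zeta\right)$, which supplies the $-\dot{L}/(2L)$ term appearing in the integral divergence hypothesis. The drift-induced boundary term $-\xi \dot{A}(t)L(t)/(2DL_0) = -\xi c L(t)/(2DL_0)$ is bounded above uniformly in $\xi \in [0,L_0]$ precisely when $cL(t)$ is bounded below (automatic for $c \geq 0$, and equivalent to $|c|L(t)$ being bounded above when $c < 0$), and the $-\xi^2 \dot{L}(t)L(t)/(4DL_0^2)$ term is bounded above thanks to $\dot{L}(t)L(t)$ being bounded below, exactly as in Corollary \ref{Corollary_0conv}. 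Combining all of these gives
\begin{equation*}
u(\xi,t) \leq a'\sin\!\left(\tfrac{\pi\xi}{L_0}\right)\exp\!\left(-\int_0^t\!\left(\frac{D\pi^2}{L(\zeta)^2} - \frac{D\pi^2}{L_{crit}(c)^2} - \frac{L(\zeta)[\ddot{L}(\zeta)]^{+}}{4D} + \frac{\dot{L}(\zeta)}{2L(\zeta)}\right)d\zeta\right)
\end{equation*}
for some $a' > 0$.

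Finally, I would invoke the integral divergence assumption, which is exactly the condition ensuring that the exponent tends to $-\infty$, so the right-hand side tends to zero uniformly in $\xi$, and hence $\psi(x,t) = u(\xi,t) \to 0$ uniformly in $x$. I do not expect any serious obstacle here; the only point that genuinely requires attention is the sign bookkeeping when absorbing $cL(t)$ and $\dot{L}(t)L(t)$ into the constant, and in particular checking that ``$cL(t)$ bounded below'' is the correct hypothesis to keep $-\xi c L(t)/(2DL_0)$ bounded above uniformly in $\xi \in [0,L_0]$ regardless of the sign of $c$.
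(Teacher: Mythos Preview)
Your proposal is correct and matches the paper's approach exactly: the paper itself gives no separate proof for this corollary, merely stating that the argument of Corollary~\ref{Corollary_0conv} goes through verbatim with $L_{crit}(c)$ in place of $L_{crit}$, and you have spelled out precisely those details. The one cosmetic omission is that the initial-condition form coming from $w(\xi,0)\leq a\sin(\pi\xi/L_0)$ should also carry the factor $\exp\bigl(-\xi cL(0)/(2DL_0)\bigr)$, but since this is bounded it can be absorbed into $a$ and the argument is unaffected.
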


\section{Nonlinear reaction terms of KPP type}\label{section_nonlinear}
Consider the nonlinear version of the equation \eqref{eq_psi}, where $f$ is a function of KPP type. Note that the linear solution is now a supersolution. Consequently, Corollary \ref{Corollary_0conv} immediately extends to the nonlinear case, and we obtain the following.
\begin{corollary}Convergence to zero.\\
Let $\psi(x,t)$ satisfy the nonlinear problem on $0<x<L(t)$, where $f$ satisfies \eqref{eq_f}. Assume
that \eqref{eq_cond_4} and \eqref{eq_cond_5} hold.
Then $\psi(x,t)$ converges uniformly to zero as $t\rightarrow \infty$.
\end{corollary}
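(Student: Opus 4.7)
The plan is to reduce the nonlinear problem to the already-settled linear one via comparison, then invoke Corollary \ref{Corollary_0conv} directly. Let $\psi_L(x,t)$ denote the solution of the linear problem on $0<x<L(t)$ with the same zero Dirichlet data at $x=0$ and $x=L(t)$ and the same initial data $\psi_L(x,0)=\psi(x,0)\geq 0$.

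First I would record the key pointwise inequality that the KPP hypothesis \eqref{eq_f} supplies: since $f(k)/k$ is non-increasing on $k>0$ with $f(0)=0$ and $f'(0)>0$, we have $f(k)\leq f'(0)k$ for all $k\geq 0$. Consequently
\begin{equation*}
\frac{\partial \psi_L}{\partial t} - D\frac{\partial^2 \psi_L}{\partial x^2} - f(\psi_L) \;=\; f'(0)\psi_L - f(\psi_L) \;\geq\; 0,
\end{equation*}
so $\psi_L$ is a supersolution of the nonlinear equation. I would then apply the parabolic comparison principle on the time-dependent domain $\{(x,t):0<x<L(t)\}$: because $\psi$ and $\psi_L$ agree at $t=0$ and on the moving lateral boundary $x\in\{0,L(t)\}$, and because $f$ is Lipschitz (so that the difference $\psi_L-\psi$ satisfies a linear parabolic inequality with bounded coefficients after the usual mean-value reduction), we obtain $0\leq \psi(x,t)\leq \psi_L(x,t)$ for all $t\geq 0$.

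With this sandwich in hand, the proof concludes by applying Corollary \ref{Corollary_0conv} to $\psi_L$: assumptions \eqref{eq_cond_4} and \eqref{eq_cond_5} are precisely those required there, so $\psi_L(x,t)\to 0$ uniformly in $x$ as $t\to\infty$, and hence so does $\psi$.

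The only subtle step is the comparison principle on a moving interval. This is standard for Lipschitz nonlinearities and $C^2$ boundaries, but I would explicitly note the mean-value-theorem linearization to justify invoking the maximum principle on $\psi_L-\psi$; no new estimate beyond those of Theorem \ref{Theorem_comparison} is needed, and the KPP structure is used only through the single inequality $f(k)\leq f'(0)k$.
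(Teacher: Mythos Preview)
Your proposal is correct and follows the same approach as the paper: the paper simply observes that the linear solution is a supersolution of the nonlinear problem (since $f(k)\leq f'(0)k$) and then invokes Corollary~\ref{Corollary_0conv}. You have spelled out the comparison argument in more detail than the paper does, but the underlying idea is identical.
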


The results concerning the lower bound now depend on whether or not $f$ is linear in some neighbourhood of the origin.
\begin{proposition} Non-convergence to zero.\\
Let $\psi(x,t)$ satisfy the nonlinear problem on $0<x<L(t)$, where $f$ satisfies \eqref{eq_f}, and where there is some $k_0>0$ such that 
\begin{equation}
f(k)=f'(0)k \textrm{ for } 0\leq k\leq k_0.
\end{equation}
Assume that there exist finite constants $m_1$, $m_2$, $M$, $I_1$, $I_2$ such that for all $t\geq 0$
\begin{equation}
0<m_1\leq L(t)\leq m_2
\end{equation}
\begin{equation}
\vert\dot{L}(t)L(t)\vert \leq M,
\end{equation}
\begin{equation}
\left\vert\int_0^t \left(\frac{1}{L(\zeta)^2}- \frac{1}{L_{crit}^2}\right) d\zeta \right\vert \leq I_1,
\end{equation}
\begin{equation}
\int_0^{t} L(\zeta)[\ddot{L}(\zeta)]^{-} d\zeta \leq I_2.
\end{equation}
Then for $\psi(x,0)\geq 0$, and not identically zero, $\psi(x,t)$ does not converge to zero as $t\rightarrow \infty$. In particular, equation \eqref{eq_Fourier_coeff} holds.
\end{proposition}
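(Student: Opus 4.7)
The plan is to reduce the nonlinear claim to the already-proven linear Corollary~\ref{Corollary_nonconv} by exploiting that $f$ coincides with its linearisation on $[0,k_0]$. The idea is to construct a linear subsolution whose amplitude is forced to stay inside $[0,k_0]$ uniformly in time; it then also satisfies the nonlinear equation and can be compared with $\psi$ from below via the parabolic maximum principle.

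First, by the strong parabolic maximum principle and Hopf's lemma I may assume (after replacing $t=0$ by a small positive time if necessary) that
\[
u(\xi,0)\;\ge\; c\,\sin(\pi\xi/L_0)\exp\!\bigl(-\xi^{2}\dot L(0)L(0)/(4DL_0^{2})\bigr)
\]
for some $c>0$. For $\epsilon\in(0,1]$, let $v(\xi,t)$ solve the \emph{linear} transformed problem \eqref{eq_u} with initial datum equal to $\epsilon$ times the right-hand side above. Applying the lower half of Theorem~\ref{Theorem_comparison} exactly as in the proof of Corollary~\ref{Corollary_nonconv}, the hypotheses $L\le m_2$, $|\dot L L|\le M$, $\bigl|\int_0^{t}(L^{-2}-L_{crit}^{-2})\,d\zeta\bigr|\le I_1$, $\int_0^{t}L[\ddot L]^-\,d\zeta\le I_2$, and $\dot A\equiv 0$ give $v(\xi,t)\ge\epsilon B\sin(\pi\xi/L_0)$ for some $B>0$ independent of $\epsilon$ and $t$.

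The crux, which I expect to be the main obstacle, is obtaining a uniform-in-time \emph{upper} bound on $v$ so that I can enforce $v\le k_0$ everywhere. The upper half of Theorem~\ref{Theorem_comparison} introduces the factor $\exp\!\bigl(\int_0^{t}L[\ddot L]^+/(4D)\,d\zeta\bigr)$, and the hypotheses explicitly control only $[\ddot L]^-$. The observation that rescues the argument is integration by parts: using $|L\dot L|\le M$,
\[
\int_0^{t}L\ddot L\,d\zeta \;=\; \bigl[L\dot L\bigr]_0^{t} - \int_0^{t}\dot L^{\,2}\,d\zeta \;\le\; M + |L(0)\dot L(0)|,
\]
so $\int_0^{t}L[\ddot L]^+d\zeta = \int_0^{t}L\ddot L\,d\zeta + \int_0^{t}L[\ddot L]^-d\zeta \le M + |L(0)\dot L(0)| + I_2$. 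Combined with $m_1\le L\le m_2$, $|\dot L L|\le M$, and the integral hypothesis on $L^{-2}-L_{crit}^{-2}$, the upper half of Theorem~\ref{Theorem_comparison} then gives $v(\xi,t)\le \epsilon A$ for a constant $A$ independent of $\epsilon$ and $t$.

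Finally, I fix $\epsilon$ so small that $\epsilon A\le k_0$. Then $v\le k_0$ throughout, hence $f(v)=f'(0)v$ and $v$ is a solution (in particular a subsolution) of the \emph{nonlinear} transformed equation as well. Because $v(\xi,0)\le u(\xi,0)$ by construction and $v=u=0$ at $\xi\in\{0,L_0\}$, the parabolic comparison principle for the nonlinear Lipschitz problem yields $u(\xi,t)\ge v(\xi,t)\ge\epsilon B\sin(\pi\xi/L_0)$ for all $t\ge 0$. Transforming back to $\psi(x,t)$ and computing the first sine Fourier coefficient on $(0,L(t))$ exactly as in Corollary~\ref{Corollary_nonconv} delivers \eqref{eq_Fourier_coeff}.
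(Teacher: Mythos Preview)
Your argument is correct, but it takes a longer route than the paper's. The difference lies in what object you use as the subsolution. You let $v$ be the \emph{actual linear solution} with small initial data, and then need both halves of Theorem~\ref{Theorem_comparison} to trap $v$ in $[0,k_0]$; this is why you are forced into the integration-by-parts trick to control $\int_0^{t}L[\ddot L]^{+}\,d\zeta$, which is not among the hypotheses. The paper instead takes as subsolution the \emph{explicit lower-bound formula} $\hat u$ appearing in Theorem~\ref{Theorem_comparison} (i.e.\ the expression on the right of the lower estimate, with amplitude $\hat b$). Because $\hat u$ is an explicit function, one can bound it from above directly: $\sin\le 1$, $(L_0/L)^{1/2}\le (L_0/m_1)^{1/2}$, the term $-\int_0^{t}\underline Q/(2D)\,d\zeta\le 0$, $-\xi^{2}\dot L L/(4DL_0^{2})\le M/(4D)$, and the remaining exponent is controlled by $I_1$. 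No appeal to the upper half of Theorem~\ref{Theorem_comparison}, and hence no need to bound $[\ddot L]^{+}$, is required. Choosing $\hat b$ small then forces $\hat u\le k_0$ globally, so $\hat u$ is simultaneously a subsolution of the nonlinear problem, and the rest proceeds exactly as you do.

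In short: your proof is valid and the integration-by-parts observation is a nice way to recover the missing $[\ddot L]^{+}$ bound from the stated hypotheses, but the paper sidesteps the issue entirely by working with the explicit subsolution rather than the linear solution.
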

\begin{proof}
Choose $\hat{b}>0$ small enough such that both
\begin{equation}
u(\xi,0)\geq \hat{b} \sin\left(\frac{\pi\xi}{L_0}\right)\exp{\left( -\frac{\xi^2\dot{L}(0)L(0)}{4DL_0^2} \right)}
\end{equation}
and
\begin{equation}\label{eq_b_k0}
\hat{b}\left(\frac{L_0}{m_1}\right)^{1/2}\exp\left(I_1+ \frac{M}{4D}\right) \leq k_0.
\end{equation}
By Theorem \ref{Theorem_comparison}, the function
\begin{align}
\hat{u}(\xi,t)=& \hat{b} \sin\left(\frac{\pi\xi}{L_0}\right)\left(\frac{L_0}{L(t)}\right)^{1/2}\exp{(f'(0)t)} \nonumber\\
& \times \exp{\left(\int_0^t \left( -\frac{D\pi^2}{L(\zeta)^2}-\frac{\underline{Q}(\zeta)}{2D}\right) d\zeta -\frac{\xi^2\dot{L}(t)L(t)}{4DL_0^2} \right)}
\end{align}
is a subsolution for the linear equation, where $\underline{Q}(t)=\frac{L(t)[\ddot{L}(t)]^{-}}{2}$. By choice of $\hat{b}$, we have $0\leq \hat{u}(\xi,t) \leq k_0$ for all $0\leq t < \infty$, and hence $\hat{u}$ is also a subsolution for the nonlinear equation.
Therefore
\begin{equation}\label{eq_u_lowerbound}
u(\xi,t)\geq \hat{u}(\xi,t) \geq \hat{b}\left(\frac{L_0}{m_2}\right)^{1/2}\exp\left(-I_1 -\frac{I_2}{4D} - \frac{M}{4D}\right) \sin\left(\frac{\pi\xi}{L_0}\right)
\end{equation}
for all $t\geq 0$ and the conclusion follows.
\end{proof}
\begin{remark}
The lower bound for $u(\xi,t)$ in equation \eqref{eq_u_lowerbound} depends on the parameter $k_0$ via equation \eqref{eq_b_k0}. As $k_0\rightarrow 0$, $\hat{b}=O(k_0)$ and so the lower bound in equation \eqref{eq_u_lowerbound} is also $O(k_0)$. This is in keeping with the next result, which considers functions $f$ which are not linear on any neighbourhood of $0$, and no such lower bound exists as $L(t)\rightarrow L_{crit}$.
\end{remark}
\begin{proposition}Convergence to zero.\\
Let $\psi(x,t)$ satisfy the nonlinear problem on $0<x<L(t)$, where $f$ satisfies \eqref{eq_f}, and where there is some $k_0>0$ such that 
\begin{equation}\label{eq_f_strict}
f(k)<f'(0)k \textrm{ for } 0< k< k_0.
\end{equation}
If $\limsup_{t\rightarrow\infty} L(t)\leq L_{crit}$ then $\psi(x,t)$ converges uniformly to zero as $t\rightarrow \infty$.
\end{proposition}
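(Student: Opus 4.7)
The plan is to dominate $\psi$ by the solution of the same nonlinear equation on a slightly enlarged \emph{fixed} interval, and then let the enlargement shrink to zero, exploiting that the positive stationary KPP solution bifurcates from zero as $L\downarrow L_{crit}$. Two preliminaries: the strict inequality \eqref{eq_f_strict} together with the monotonicity of $f(k)/k$ upgrades to $f(k)<f'(0)k$ for every $k>0$ (for $k\geq k_0$, pick any $k_1\in(0,k_0)$ and use $f(k)/k\leq f(k_1)/k_1<f'(0)$); and, since $f(k)\leq 0$ for $k\geq 1$, the constant $M_0:=\max(1,\|\psi(\cdot,0)\|_\infty)$ is a supersolution, so $\psi\leq M_0$ throughout.

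Fix $\delta>0$ and pick $T_\delta$ so that $L(t)\leq L_\delta:=L_{crit}+\delta$ for all $t\geq T_\delta$. Let $\Psi_\delta$ solve the same nonlinear equation on the fixed interval $(0,L_\delta)$ with zero Dirichlet data and initial value $\Psi_\delta(x,T_\delta)\equiv M_0$. On the moving subdomain $[0,L(t)]\subseteq[0,L_\delta]$ the difference $\Psi_\delta-\psi$ satisfies a linear parabolic equation with bounded coefficient (Lipschitz $f$) and is non-negative on the parabolic boundary (zero at $x=0$, and $\Psi_\delta(L(t),t)\geq 0$ at the smooth moving right boundary), so the parabolic maximum principle on a non-cylindrical domain yields $\psi\leq \Psi_\delta$ for $t\geq T_\delta$. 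On the fixed interval, $M_0$ is a supersolution, hence $\Psi_\delta(\cdot,t)$ is monotonically non-increasing in $t$; since $L_\delta>L_{crit}$, standard KPP theory identifies the monotone limit as the unique positive stationary solution $\theta_\delta$, with convergence uniform by Dini's theorem. Consequently
\[
\limsup_{t\to\infty}\sup_{x\in[0,L(t)]}\psi(x,t)\leq\|\theta_\delta\|_\infty.
\]

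It remains to show $\|\theta_\delta\|_\infty\to 0$ as $\delta\to 0^+$, which is the principal technical step and the only place where \eqref{eq_f_strict} enters in an essential way. The uniform bound $\theta_\delta\leq 1$ (since $1$ is a supersolution on any interval) together with elliptic regularity ensures that, along any sequence $\delta_n\to 0$ with $\|\theta_{\delta_n}\|_\infty\geq c>0$, the restrictions to $[0,L_{crit}]$ admit a $C^2$ subsequential limit $\theta_*\geq 0$ satisfying $D\theta_*''+f(\theta_*)=0$, $\theta_*(0)=\theta_*(L_{crit})=0$, and $\|\theta_*\|_\infty\geq c$. Testing this ODE against $\sin(\pi x/L_{crit})$ and integrating by parts twice gives
\[
\int_0^{L_{crit}}\sin\!\left(\tfrac{\pi x}{L_{crit}}\right)\bigl[f(\theta_*)-f'(0)\theta_*\bigr]\,dx=0,
\]
whose integrand is non-positive everywhere and, by the promoted strict sublinearity from the first paragraph, strictly negative on the nonempty open set $\{\theta_*>0\}$, a contradiction. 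Therefore $\|\theta_\delta\|_\infty\to 0$ and $\psi$ converges uniformly to zero. Every other step is routine: fixed-domain KPP theory for the attractivity of $\theta_\delta$, and the non-cylindrical maximum principle for the cross-domain comparison.
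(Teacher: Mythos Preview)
Your proof is correct and follows essentially the same route as the paper: dominate $\psi$ by the KPP solution on a fixed interval $(0,L_{crit}+\delta)$, pass to the stationary profile $\theta_\delta$, and show $\|\theta_\delta\|_\infty\to 0$ by proving the limiting elliptic problem on $(0,L_{crit})$ admits only the trivial solution. The only cosmetic difference is in this last step: you test the limit ODE against the first Dirichlet eigenfunction $\sin(\pi x/L_{crit})$, whereas the paper tests against the solution itself and invokes Poincar\'e's inequality; both encode the same principal-eigenvalue obstruction.
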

\begin{proof}
For each $\varepsilon>0$, let $L_{\varepsilon}=L_{crit}(1+\varepsilon)$ and consider the solution $\psi_{\varepsilon}(x,t)$ to the nonlinear parabolic equation on the fixed interval $(0, L_{\varepsilon})$. Standard arguments show that as $t\rightarrow\infty$, $\psi_{\varepsilon}$ converges uniformly to a non-negative solution of the elliptic equation
\begin{equation}\label{eq_elliptic}
DU_{\varepsilon}''(x)+f(U_{\varepsilon}(x)) = 0 \qquad \textrm{in } 0< x< L_{\varepsilon}
\end{equation}
\begin{equation}\label{eq_ellipticBC}
U_{\varepsilon}(0)=U_{\varepsilon}(L_{\varepsilon})=0.
\end{equation}
Since $L_{\varepsilon}>L_{crit}$, and $f$ satisfies \eqref{eq_f}, a positive solution $U_{\varepsilon}$ to equations \eqref{eq_elliptic}, \eqref{eq_ellipticBC} is unique (see Theorem 1 and Remark 1 of \cite{BreOsw}).
Now for $t$ large enough, $L(t)\leq L_{\varepsilon}$ and $\psi_{\varepsilon}(x,t)$ is a supersolution for $\psi(x,t)$. Therefore it must hold that
\begin{equation}\label{eq_limsup}
\limsup_{t\rightarrow \infty} \left(\sup_x \psi(x,t)\right) \leq \vert\vert U_{\varepsilon} \vert\vert_{\infty} \qquad \textrm{for each } \varepsilon>0.
\end{equation}
But as $\varepsilon\rightarrow 0$ the functions $U_{\varepsilon}$ converge uniformly to a non-negative solution $U$ of
equations \eqref{eq_elliptic}, \eqref{eq_ellipticBC} on the interval of length $L_0=L_{crit}$.
This limit $U$ therefore satisfies
\begin{align}\label{eq_intU}
\int_0^{L_{crit}}DU'(x)^2dx = \int_0^{L_{crit}}-DU(x)U''(x)dx &= \int_0^{L_{crit}} f(U(x))U(x)dx \nonumber\\
&\leq \int_0^{L_{crit}} f'(0)U(x)^2dx,
\end{align}
where the inequality follows from $f(k)\leq f'(0)k$.
Now, since $f(k)<f'(0)k$ on $(0,k_0)$, and since $U(x)=0$ at the endpoints, the inequality in \eqref{eq_intU} would be strict unless $U\equiv 0$.
But $f'(0)=\frac{D\pi^2}{L_{crit}^2}$, and so a strict inequality would contradict Poincar\'{e}'s inequality.
Therefore we deduce that indeed $U(x)\equiv 0$ on $[0,L_{crit}]$.
So, $\vert\vert U_{\varepsilon} \vert\vert_{\infty} \rightarrow \vert\vert U \vert\vert_{\infty}=0$ as $\varepsilon\rightarrow 0$, and it follows from \eqref{eq_limsup} that $\lim_{t\rightarrow \infty} \left(\sup_x \psi(x,t)\right) =0$.
\end{proof}

\section*{Acknowledgements}
I am very grateful to my PhD supervisor, Professor Elaine Crooks. I am also grateful for an EPSRC-funded studentship (EPSRC DTP grant EP/R51312X/1).

\end{document}